\newtheorem*{theorem-main}{Theorem~\ref{main.thm}}
\newtheorem{theorem}{Theorem}[section]
\newtheorem{proposition}[theorem]{Proposition}
\newtheorem{lemma}[theorem]{Lemma}
\newtheorem{corollary}[theorem]{Corollary}
\theoremstyle{definition}
\newtheorem{definition}[theorem]{Definition}
\theoremstyle{remark}
\theoremstyle{remark}
\def\({{\rm (}}
\def\){{\rm )}}
\let\Mathrm\operator@font
\let\Cal\mathcal
\let\Bbb\mathbb
\newcommand{\fm}{\ensuremath{\mathfrak m}}
\def\standop#1{\mathop{\Mathrm #1}\nolimits}
\def\difstop#1#2{\expandafter\def\csname #1\endcsname{\standop{#2}}}
\def\defstop#1{\difstop{#1}{#1}}
\def\da{^\dagger}
\def\dda{^\ddagger}
\def\ddd{^{\dagger\ddagger}}
\def\Lop{_{\Lambda\op}}
\def\op{^{\standop{op}}}
\def\C{\Cal C}
\def\M{\Cal M}
\def\N{\Cal N}
\def\O{\Cal O}
\def\Q{\Cal Q}
\def\V{\Cal V}
\def\fm{\mathfrak{m}}
\def\uHom{\mathop{\text{\underline{$\Mathrm Hom$}}}\nolimits}
\def\uEnd{\mathop{\text{\underline{$\Mathrm End$}}}\nolimits}
\def\sdarrow#1{\downarrow\hbox to 0pt{\scriptsize$#1$\hss}}
\def\suarrow#1{\uparrow\hbox to 0pt{\scriptsize$#1$\hss}}
\def\ssearrow#1{\searrow\hbox to 0pt{\scriptsize$#1$\hss}}
\def\section{\@startsection{section}{1}{\z@ }%
  {-3.5ex plus -1ex minus -.2ex}{2.3ex plus .2ex}{\bf }}
\long\def\refname{\par\kern -3ex
  \begin{center}\rm R\sc{eferences}\end{center}\par\kern 
  -2ex}
\def\@seccntformat#1{\csname the#1\endcsname.\quad}
\def\@@@sect#1#2#3#4#5#6[#7]#8{%
  \ifnum #2>\c@secnumdepth 
  \def \@svsec {}\else \refstepcounter {#1}%
  \def\@svsec{}
  \fi 
  \@tempskipa #5\relax 
  \ifdim \@tempskipa >\z@ 
  \begingroup #6\relax \@hangfrom {\hskip #3\relax 
    \@svsec}{\interlinepenalty \@M #8\par }\endgroup 
  \csname #1mark\endcsname {#7}
  \else 
  \def \@svsechd {#6\hskip #3\@svsec #8\csname #1mark\endcsname {#7}}
  \fi \@xsect {#5}}
\def\@@@startsection#1#2#3#4#5#6{%
  \if@noskipsec \leavevmode \fi \par \@tempskipa #4\relax \@afterindenttrue 
  \ifdim \@tempskipa <\z@ \@tempskipa -\@tempskipa \@afterindentfalse 
  \fi \if@nobreak \everypar {}\else \addpenalty {\@secpenalty }\addvspace 
  {\@tempskipa }\fi \@ifstar {\@ssect {#3}{#4}{#5}{#6}}{\@dblarg 
    {\@@@sect {#1}{#2}{#3}{#4}{#5}{#6}}}}
\def\theparagraph{\thesection.\arabic{paragraph}}
\def\aparagraph{\@@@startsection{paragraph}{2}{\z@ }%
  {1.75ex plus .2ex minus .15ex}{-1em}{\bf(\theparagraph) } }
\def\paragraph{\@@@startsection{paragraph}{2}{\z@ }%
  {1.75ex plus .2ex minus .15ex}{-1em}{}{\bf(\theparagraph)} }
\let\c@theorem\c@paragraph
\title{Higher-dimensional absolute versions
  of symmetric, Frobenius, and quasi-Frobenius algebras}
\author{M{\sc itsuyasu} H{\sc ashimoto}\thanks{This work was supported by JSPS KAKENHI Grant Number 26400045.}}
\date{\normalsize
  Department of Mathematics, Okayama University\\
  Okayama 700--8530, JAPAN\\
  {\small \tt mh@okayama-u.ac.jp}
}
\begin{document}

\maketitle
\footnote[0]
{2010 \textit{Mathematics Subject Classification}. 
  Primary 16E65;
  Secondary 14A15.
  Key Words and Phrases.
  canonical module; symmetric algebra;
  Frobenius algebra; quasi-Frobenius algebra;
  $n$-canonical module.
}

\begin{abstract}
  In this paper, we define and discuss higher-dimensional and
  absolute versions of symmetric, Frobenius, and quasi-Frobenius
  algebras.
  In particular, we compare these with the relative notions defined
  by Scheja and Storch.
  We also prove the validity of codimension two-argument for modules over
  a coherent sheaf of algebras with a $2$-canonical module,
  generalizing a result of the author.
\end{abstract}


\section{Introduction}

\paragraph
Let $(R,\fm)$ be a semilocal Noetherian commutative ring, and $\Lambda$ a
module-finite $R$-algebra.
In \cite{Hashimoto2}, we defined the canonical module $K_\Lambda$ of $\Lambda$.
The purpose of this paper is two fold, each of which is deeply related to $K_\Lambda$.

\paragraph\label{first-part.par}
In the first part, we define and discuss higher-dimensional and
absolute notions of 
symmetric, Frobenius, and quasi-Frobenius algebras and their non-Cohen--Macaulay versions.
In commutative algebra, the non-Cohen--Macaulay version of Gorenstein ring is known as
quasi-Gorenstein rings.
What we discuss here is a non-commutative version of such rings.
Scheja and Storch \cite{SS} discussed a relative notion, and our definition is
absolute in the sense that it depends only on $\Lambda$ and
is independent of the choice of $R$.
If $R$ is local, our quasi-Frobenius property agrees with
Gorensteinness discussed by Goto and Nishida \cite{GN},
see Proposition~\ref{GN.prop} and Corollary~\ref{GN.cor}.

\paragraph\label{second-part.par}
In the second part, we show that the codimension-two argument using the
existence of $2$-canonical modules in \cite{Hashimoto12} is still valid in
non-commutative settings.
For the definition of an $n$-canonical module, see (\ref{n-can.par}).
Codimension-two argument, which states (roughly speaking) that removing a closed subset
of codimension two or more does not change the
category of coherent sheaves which satisfy Serre's $(S_2')$ condition, 
is sometimes used in algebraic geometry, commutative algebra and invariant theory.
For example, information on the canonical sheaf and the class group is
retained when we remove the singular locus of a normal variety over an algebraically closed
field, and then these objects
are respectively grasped as the top exterior power of the cotangent
bundle and the Picard group of a smooth variety.
In \cite{Hashimoto12}, almost principal bundles are studied.
They are principal bundles after removing closed subsets of codimension two or more.

We prove the following.
Let $X$ be a locally Noetherian scheme, $U$ an open subset of $X$ such that
$\codim_X(X\setminus U)\geq 2$.
Let $i:U\rightarrow X$ be the inclusion.
Let $\Lambda$ be a coherent $\O_X$-algebra.
If $X$ possesses a $2$-canonical module $\omega$, then the inverse image $i^*$ induces
the equivalence between the category of coherent right $\Lambda$-modules which
satisfy the $(S_2')$ condition and the category of coherent right $i^*\Lambda$-modules
which satisfy the $(S_2')$ condition.
The quasi-inverse is given by the direct image $i_*$.
What was proved in \cite{Hashimoto12} was the case that $\Lambda=\O_X$.
If, moreover, $\omega=\O_X$ (that is to say, $X$ satisfy the
$(S_2)$ and $(G_1)$ condition), then the assertion has been well-known,
see \cite{Hartshorne4}.

\paragraph
$2$-canonical modules are ubiquitous in algebraic geometry.
If $\Bbb I$ is a dualizing complex of a Noetherian scheme $X$, then
the lowest non-vanishing cohomology group of $\Bbb I$ is semicanonical.
A rank-one reflexive sheaf over a normal variety is $2$-canonical.

\paragraph
Section~\ref{preliminaries.sec} is for preliminaries.
Section~\ref{Frobenius.sec} is devoted to the discussion of the first theme mentioned
in the paragraph (\ref{first-part.par}), while 
Section~\ref{codim-two.sec} is for the second theme mentioned in (\ref{second-part.par}).

\paragraph
Acknowledgments:
Special thanks are due to Professor
Osamu Iyama for valuable advice and
discussion.

The essential part of this paper has first appeared as \cite[sections~9--10]{Hashimoto}.
When it is published as \cite{Hashimoto2}, they have been removed after the
requirement to shorten the paper (also, the title has been changed slightly).
Here we revive them as an independent paper.

\section{Preliminaries}\label{preliminaries.sec}

\paragraph
Throughout this paper, $R$ denotes a Noetherian commutative ring.
For a module-finite
$R$-algebra $\Lambda$, a $\Lambda$-module means a left $\Lambda$-module.
$\Lambda\op$ denotes the opposite algebra of $\Lambda$, and thus a
$\Lambda\op$-module is identified with a right $\Lambda$-module.
A $\Lambda$-bimodule means a $\Lambda\otimes_R \Lambda\op$-module.
The category of finite $\Lambda$-modules is denoted by $\Lambda\md$.
The category $\Lambda\op\md$ is also denoted by $\md\Lambda$.

\paragraph Let $(R,\fm)$ be semilocal and $\Lambda$ be a module-finite $R$-algebra.
For an $R$-module $M$, the $\fm$-adic completion of $M$ is denoted by $\hat M$.
For a finite $\Lambda$-module $M$, by $\dim M$ or $\dim_\Lambda M$ we mean
$\dim_R M$, which is independent of the choice of $R$.
By $\depth M$ or $\depth_\Lambda M$ we mean $\depth_R(\fm,M)$, which is independent of
$R$.
We say that $M$ is globally Cohen--Macaulay (GCM for short) if
$\dim M=\depth M$.
We say that $M$ is globally maximal Cohen--Macaulay (GMCM for short) if
$\dim\Lambda=\depth M$.
If $R$ happens to be local, then $M$ is GCM (resp.\ GMCM) if and only if
$M$ is Cohen--Macaulay (resp.\ maximal Cohen--Macaulay) as an $R$-module.

\paragraph
For $M\in\Lambda\md$, we say that $M$ satisfies $(S_n')^{\Lambda,R}$,
$(S_n')^R$ or $(S_n')$ if
$\depth_{R_P}M_P\geq \min(n,\height_R P)$ for every $P\in\Spec R$
(this notion depends on $R$).

\paragraph
Let $X$ be a locally Noetherian scheme and $\Lambda$ a coherent $\O_X$-algebra.
For a coherent $\Lambda$-module $\M$, we say that $\M$ satisfies $(S_n')$
or $(S_n')^{\Lambda,X}$, or sometimes $\M\in (S_n')^{\Lambda,X}$, if
$\depth_{\O_{X,x}}\M_x\geq \min(n,\dim \O_{X,x})$ for every $x\in X$.

\paragraph\label{canonical-complete-def.par}
Assume that $(R,\fm)$ is complete semilocal, and $\Lambda\neq 0$ a module-finite
$R$-algebra.
Let $\Bbb I$ be a normalized dualizing complex of $R$.
The lowest non-vanishing cohomology group $\Ext^{-s}_R(\Lambda,\Bbb I)$
($\Ext^i_R(\Lambda,\Bbb I)=0$ for $i<-s$) is denoted by $K_\Lambda$, and is called the
{\em canonical module} of $\Lambda$.
If $\Lambda=0$, then we define that $K_\Lambda=0$.
For basics on the canonical modules, we refer the reader to \cite{Hashimoto2}.
Note that $K_\Lambda$ depends only on $\Lambda$, and is independent of $R$.

\paragraph
Assume that $(R,\fm)$ is semilocal which may not be complete.
We say that a finitely generated $\Lambda$-bimodule $K$ is
a {\em canonical module} of $\Lambda$
if $\hat K$ is isomorphic to
the canonical module $K_{\hat \Lambda}$ as a 
$\hat \Lambda$-bimodule.
It is unique up to isomorphisms, and denoted by $K_\Lambda$.
We say that $K \in \md \Lambda$ is a right (resp.\ left)
canonical module of $\Lambda$
if $\hat K$ is isomorphic to $K_{\hat \Lambda}$ in $\md \hat \Lambda$
(resp.\ $\hat\Lambda\md$).
If $K_\Lambda$ exists, then $K$ is a right canonical module if and only if
$K\cong K_\Lambda$ in $\md\Lambda$.

\paragraph
We say that $\omega$
is an $R$-semicanonical right $\Lambda$-module if for any $P\in\Spec R$,
$R_P \otimes_R \omega$ is
the right canonical module
$R_P\otimes_R \Lambda$ 
for any $P\in\supp_R \omega$.

\paragraph\label{n-can.par}
Let $C\in\md \Lambda$.
We say that $C$ is an
$n$-canonical right $\Lambda$-module over $R$ if $C\in (S_n')^R$,
and for each $P\in\Spec R$ with $\height P<n$, we have that $C_P$ is an
$R_P$-semicanonical right $\Lambda_P$-module.

\section{Symmetric and Frobenius algebras}\label{Frobenius.sec}

\paragraph Let $(R,\fm)$ be a Noetherian semilocal ring, and $\Lambda$ a
module-finite $R$-algebra.
Let $K_\Lambda$ denote the canonical module of $\Lambda$, see \cite{Hashimoto2}.

We say that $\Lambda$ is {\em quasi-symmetric}
if $\Lambda$ is the canonical module of $\Lambda$.
That is, $\Lambda\cong K_\Lambda$ as $\Lambda$-bimodules.
It is called {\em symmetric} if it is quasi-symmetric and GCM.
Note that $\Lambda$ is quasi-symmetric (resp.\ symmetric) if and only
if $\hat \Lambda$ is so.
Note also that quasi-symmetric and symmetric are absolute notion, and
is independent of the choice of $R$ in the sense that the definition
does not change when we replace $R$ by the center of $\Lambda$,
because $K_\Lambda$ is independent of the choice of $R$.

\paragraph
For (non-semilocal) Noetherian ring $R$,
we say that $\Lambda$ is locally quasi-symmetric
(resp.\ locally symmetric) over $R$ if for any $P\in \Spec R$,
$\Lambda_P$ is a quasi-symmetric (resp.\ symmetric) $R_P$-algebra.
This is equivalent to say that for any maximal ideal $\fm $ of $R$,
$\Lambda_\fm$ is quasi-symmetric (resp.\ symmetric), see
\cite[(7.6)]{Hashimoto2}.

In the case that $(R,\fm)$ is semilocal,
$\Lambda$ is 
locally quasi-symmetric (resp.\ locally symmetric) over $R$ if
it is quasi-symmetric (resp.\ symmetric), but the converse is
not true in general.

\begin{lemma}\label{pseudo-Frobenius.lem}
  Let $(R,\fm)$ be a Noetherian
  semilocal ring, and $\Lambda$ a module-finite
  $R$-algebra.
  Then the following are equivalent.
  \begin{enumerate}
  \item[\bf 1] $\Lambda_\Lambda$ is the right canonical module of $\Lambda$.
  \item[\bf 2] ${}_\Lambda\Lambda$ is the left canonical module of $\Lambda$.
  \end{enumerate}
\end{lemma}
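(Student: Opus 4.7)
First, since both conditions are defined in terms of the $\fm$-adic completion, I can reduce at once to the case $R=\hat R$. In that case $K_\Lambda=\Ext^{-s}_R(\Lambda,\Bbb I)$ (with $s=\dim\Lambda$) is a genuine $\Lambda$-bimodule, and what I must show is that $\Lambda\cong K_\Lambda$ in $\md\Lambda$ if and only if $\Lambda\cong K_\Lambda$ in $\Lambda\md$.

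Suppose $\varphi\colon\Lambda\xrightarrow{\sim}K_\Lambda$ is an isomorphism in $\md\Lambda$. Applying $\Hom_{\md\Lambda}(-,K_\Lambda)$ to $\varphi$ produces the bijection
\[
\End_{\md\Lambda}(K_\Lambda)\;\xrightarrow{f\mapsto f\circ\varphi}\;\Hom_{\md\Lambda}(\Lambda,K_\Lambda),
\]
which is left $\Lambda$-linear when both sides carry the natural left $\Lambda$-action induced from the target $K_\Lambda$ (immediate from associativity of the bimodule action). The target is canonically ${}_\Lambda K_\Lambda$ via evaluation at $1$. The source, via the natural left-multiplication map $\Lambda\to\End_{\md\Lambda}(K_\Lambda)$, identifies with ${}_\Lambda\Lambda$ as a left $\Lambda$-module. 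Composing the three isomorphisms yields ${}_\Lambda\Lambda\cong{}_\Lambda K_\Lambda$, which is condition~{\bf 2}. The reverse implication is proved by the symmetric argument, with the roles of left and right interchanged.

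The main obstacle is the bimodule identification $\Lambda\xrightarrow{\sim}\End_{\md\Lambda}(K_\Lambda)$ by left multiplication, which is the non-commutative analog of the classical commutative fact $R\cong\End_R(\omega_R)$ for rings satisfying Serre's $(S_2)$ condition. Under condition~{\bf 1} the right $\Lambda$-module $\Lambda$ inherits the $(S_2')^R$ property from $K_\Lambda$, so the hypothesis that typically makes such an identification work is automatically in force; the statement itself should be available from the basic properties of the canonical module developed in \cite{Hashimoto2}. Granting this input, the remainder of the argument is pure Hom--Yoneda manipulation.
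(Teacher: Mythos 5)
Your Hom--Yoneda reduction is sound: granting that left multiplication gives a bijection $\Lambda\to\End_{\md\Lambda}(K_\Lambda)$, condition {\bf 1} does imply condition {\bf 2} exactly as you describe (and symmetrically for the converse). But that bijection is the entire mathematical content of the lemma, and as written you have a gap there: it is not a formal consequence of the definition of $K_\Lambda$, and you cite no precise statement from \cite{Hashimoto2} that delivers it (note that in Section~\ref{codim-two.sec} the endomorphism ring $\uEnd_\Lambda\C$ of a $2$-canonical module is introduced as a possibly different algebra $\Gamma$, not identified with $\Lambda$). Saying the analog of $R\cong\End_R(\omega_R)$ ``should be available'' is not enough; you must either locate the exact result or prove it, and proving it costs roughly as much as the lemma itself.

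The claim is in fact true in your situation, and the proof runs through the same reductions the paper performs: after completing, replace $R$ by a Noether normalization of $R/\ann_R\Lambda$, so that $R$ is regular, $\Lambda$ is $R$-faithful, and $K_\Lambda=\Lambda^*=\Hom_R(\Lambda,R)$. Hom--tensor adjunction gives
\[
\End_{\md\Lambda}(K_\Lambda)=\Hom_{\md\Lambda}\bigl(K_\Lambda,\Hom_R(\Lambda,R)\bigr)\cong\Hom_R(K_\Lambda\otimes_\Lambda\Lambda,R)=\Lambda^{**},
\]
and under this identification your left-multiplication map becomes the biduality map $\Lambda\to\Lambda^{**}$; condition {\bf 1} together with \cite[Lemma~5.10]{Hashimoto2} gives $\Lambda\in(S_2')$ over the regular base, hence $\Lambda$ is torsion-free and reflexive there, so biduality is an isomorphism. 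With this supplied, your argument closes and is a genuinely different route from the paper's: the paper instead encodes the right-module isomorphism as a pairing $\varphi:\Lambda\otimes_R\Lambda\to R$, notes that the induced left-module map $h'$ is the ``transpose'' of the right-module map $h$, and verifies $h'$ is an isomorphism by localizing at primes of height at most one (using reflexivity), where $\Lambda$ is $R$-free and invertibility of a matrix passes to its transpose. Both arguments rest on the same $(S_2')$/reflexivity input; yours packages it as a noncommutative Aoyama-type identification $\Lambda\cong\End_{\md\Lambda}(K_\Lambda)$, which is of independent interest, while the paper's transpose check is more elementary and self-contained.
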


\begin{proof}
  We may assume that $R$ is complete.
  Then replacing $R$ by a Noether normalization of
  $R/\ann_R \Lambda$, we may assume
  that $R$ is regular and $\Lambda$ is a faithful $R$-module.

  We prove {\bf 1$\Rightarrow$\bf 2}.
  By \cite[Lemma~5.10]{Hashimoto2}, $K_\Lambda$ satisfies $(S_2')^R$.
  By assumption, $\Lambda_\Lambda$ satisfies $(S_2')^R$.
  As $R$ is regular and $\dim R=\dim \Lambda$, $K_\Lambda=\Lambda^*
  =\Hom_R(\Lambda,R)$.
  So we get an $R$-linear map
  \[
  \varphi:\Lambda\otimes_R\Lambda \rightarrow R
  \]
  such that $\varphi(ab\otimes c)=\varphi(a\otimes bc)$ and that
  the induced map $h:\Lambda\rightarrow \Lambda^*$ given by $h(a)(c)=\varphi(a\otimes
  c)$ is an isomorphism (in $\md \Lambda$).
  Now $\varphi$ induces a homomorphism $h':\Lambda\rightarrow \Lambda^*$ in
  $\Lambda\md$ given by $h'(c)(a)=\varphi(a\otimes c)$.
  To verify that this is an isomorphism, as $\Lambda$ and $\Lambda^*$ are
  reflexive $R$-modules, we may localize at a prime $P$ of $R$ of height at most
  one, and then take a
  completion, and hence we may further assume that $\dim R\leq 1$.
  Then $\Lambda$ is a finite free $R$-module, and the matrices of $h$ and $h'$ are
  transpose each other.
  As the matrix of $h$ is invertible, so is that of $h'$, and $h'$ is an isomorphism.

  {\bf 2$\Rightarrow$1} follows from {\bf 1$\Rightarrow$2},
  considering the opposite ring.
\end{proof}

\begin{definition}
  Let $(R,\fm)$ be semilocal.
  We say that $\Lambda$ is a {\em pseudo-Frobenius $R$-algebra}
  if the equivalent conditions of Lemma~\ref{pseudo-Frobenius.lem} are
  satisfied.
  If $\Lambda$ is GCM in addition, then it is called a
  {\em Frobenius $R$-algebra}.
  Note that these definitions
  are independent of the choice of $R$.
  Moreover, $\Lambda$ is pseudo-Frobenius (resp.\ Frobenius) if and only
  if $\hat\Lambda$ is so.
  For a general $R$, we say that $\Lambda$ is locally pseudo-Frobenius
  (resp.\ locally Frobenius) over $R$
  if $\Lambda_P$ is pseudo-Frobenius (resp.\ Frobenius) for $P\in\Spec R$.
\end{definition}

\begin{lemma}\label{quasi-Frobenius.lem}
  Let $(R,\fm)$ be semilocal.
  Then the following are equivalent.
  \begin{enumerate}
  \item[\bf 1] 
    $(K_{\hat\Lambda})_{\hat\Lambda}$ is projective in $\md\hat\Lambda$.
  \item[\bf 2] 
    ${}_{\hat\Lambda} (K_{\hat \Lambda})$ 
    is projective in $\hat\Lambda\md$,
  \end{enumerate}
  where $\hat ?$ denotes the $\fm$-adic completion.
\end{lemma}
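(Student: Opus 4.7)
The strategy is to follow the reduction scheme of Lemma~\ref{pseudo-Frobenius.lem}. First I would pass to completions, so that we may assume $R$ is complete semilocal. Then, by replacing $R$ with a Noether normalization of $R/\ann_R\Lambda$, we may further assume $R$ is regular complete semilocal and $\Lambda$ is faithful over $R$. In this reduced setting the canonical module becomes $K_\Lambda = \Lambda^* := \Hom_R(\Lambda, R)$, and the Hom--tensor adjunction gives
\[
\End_\Lambda(K_\Lambda) \;=\; \Hom_\Lambda(K_\Lambda, K_\Lambda) \;=\; \Hom_R(K_\Lambda \otimes_\Lambda \Lambda, R) \;=\; \Hom_R(K_\Lambda, R) \;=\; \Lambda^{**},
\]
identifying the endomorphism ring of $K_\Lambda$ as a right $\Lambda$-module with $\Lambda^{**}$.

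For $\textbf{1} \Rightarrow \textbf{2}$, suppose $\Lambda^n = K_\Lambda \oplus N$ in $\md \Lambda$. Applying the contravariant functor $(-)^* = \Hom_R(-,R)$, which interchanges left and right $\Lambda$-module structures and sends $\Lambda^n$ to $K_\Lambda^n$ and $K_\Lambda$ to $\Lambda^{**}$, we obtain
\[
K_\Lambda^n \;=\; \Lambda^{**} \oplus N^*
\]
in $\hat\Lambda\md$. Viewing $K_\Lambda$ as a $(\Lambda^{**}, \Lambda)$-bimodule via $\End_\Lambda(K_\Lambda) = \Lambda^{**}$, the first decomposition combined with the second (which exhibits $\Lambda^{**}$ as a direct summand of $K_\Lambda^n$ on the appropriate side) sets up a Morita context in which $K_\Lambda$ is a progenerator. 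Morita theory then gives that $K_\Lambda$ is also a progenerator on the other side, and in particular is projective as a left $\Lambda^{**}$-module; since the left $\Lambda$-action on $K_\Lambda$ coincides with the restriction along $\Lambda \to \Lambda^{**}$ of the left $\Lambda^{**}$-action, this yields projectivity of $K_\Lambda$ in $\hat\Lambda\md$.

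The reverse implication $\textbf{2} \Rightarrow \textbf{1}$ follows by symmetry: apply $\textbf{1} \Rightarrow \textbf{2}$ to $\Lambda^{op}$, noting that $K_{\Lambda^{op}}$ is naturally identified with $K_\Lambda$ with its bimodule structure interchanged. The main technical obstacle is the Morita-theoretic step: when $\Lambda$ is not reflexive ($\Lambda \neq \Lambda^{**}$), one must verify carefully that the generator property is indeed established on the appropriate side and that left $\Lambda^{**}$-projectivity descends to left $\Lambda$-projectivity. A cleaner route may be to first reduce to the reflexive case by replacing $\Lambda$ with $\Lambda^{**}$, using $K_\Lambda = K_{\Lambda^{**}}$ for the canonical bimodule, though this reduction itself requires justification via the behavior of $K_\Lambda$ under passage to the reflexive hull.
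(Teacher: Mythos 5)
Your reduction and the identification $K_\Lambda=\Hom_R(\Lambda,R)$, $\End_{\md\Lambda}(K_\Lambda)\cong\Lambda^{**}$ are fine, but the Morita-theoretic core of the argument has two genuine gaps. First, to get any mileage out of Morita theory you would need $K_\Lambda$ to be a \emph{generator} of $\md\Lambda$, i.e.\ $\Lambda_\Lambda\in\add (K_\Lambda)_\Lambda$; hypothesis \textbf{1} only gives the opposite containment $(K_\Lambda)_\Lambda\in\add\Lambda_\Lambda$, and dualizing your split decomposition gives ${}_\Lambda\Lambda^{**}\in\add{}_\Lambda K_\Lambda$, which is a statement about $\Lambda^{**}$, not about $\Lambda$. (Recall the standard facts for $\Gamma=\End_{\md\Lambda}(P)$: $P_\Lambda$ finitely generated projective implies ${}_\Gamma P$ is a generator, while $P_\Lambda$ a generator implies ${}_\Gamma P$ is projective; so projectivity of ${}_\Gamma K_\Lambda$ requires exactly the generator property you never establish.) Second, even granting that ${}_{\Lambda^{**}}K_\Lambda$ is projective, the final step --- descending to projectivity in $\Lambda\md$ by restricting along $\Lambda\to\Lambda^{**}$ --- is not a valid principle: restriction of scalars preserves projectivity only when the target ring is projective over the source (a free $R/I$-module is almost never projective over $R$), and whether ${}_\Lambda\Lambda^{**}$ is projective over $\Lambda$ is essentially the very point at issue when $\Lambda$ is not reflexive. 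So the argument breaks precisely at the step you yourself flag as the ``main technical obstacle''; flagging it does not fill it, and the suggested fallback of replacing $\Lambda$ by $\Lambda^{**}$ begs the same question.

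The paper takes a different and more elementary route that avoids endomorphism rings altogether. After the same reduction (so that $K_\Lambda=\Hom_R(\Lambda,R)$), it counts indecomposables: with $\bar\Lambda=\Lambda/\fm\Lambda$, the categories $\md\bar\Lambda$ and $\bar\Lambda\md$ have the same number $n$ of simples, hence $\md\Lambda$ and $\Lambda\md$ each have exactly $n$ indecomposable projectives; the duality $\Hom_R(?,R)$, which sends ${}_\Lambda\Lambda$ to $(K_\Lambda)_\Lambda$ and $\Lambda_\Lambda$ to ${}_\Lambda K_\Lambda$, is then used to see that $\add(K_\Lambda)_\Lambda$ and $\add{}_\Lambda K_\Lambda$ also have $n$ indecomposables each. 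By Krull--Schmidt, condition \textbf{1} becomes the equality of additive categories $\add(K_\Lambda)_\Lambda=\add\Lambda_\Lambda$, condition \textbf{2} becomes $\add{}_\Lambda K_\Lambda=\add{}_\Lambda\Lambda$, and these two equalities are interchanged by $\Hom_R(?,R)$. Your opening move (dualizing a split decomposition) is the same as the paper's use of $\Hom_R(?,R)$, but the conclusion is reached by this counting argument on $\add$-categories, not by exhibiting $K_\Lambda$ as a progenerator; if you want to salvage your approach, that is the mechanism to substitute for the Morita step. Your worry about $\Lambda\neq\Lambda^{**}$ is pertinent --- the interchange of $\add\Lambda$ and $\add K_\Lambda$ under the duality is exactly where reflexivity of the modules involved must be handled with care in either approach.
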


\begin{proof}
  We may assume that $(R,\fm,k)$ is complete regular local and
  $\Lambda$ is a faithful $R$-module.
  Let $\bar ?$ denote the functor $k\otimes_R ?$.
  Then $\bar \Lambda$ is a finite dimensional $k$-algebra.
  So $\md\bar\Lambda$ and $\bar\Lambda\md$ have the same number of
  simple modules, say $n$.
  An indecomposable projective module in $\md\Lambda$ is nothing but
  the projective cover of a simple module in $\md\bar\Lambda$.
  So $\md\Lambda$ and $\Lambda\md$ have $n$ indecomposable projectives.
  Now $\Hom_R(?,R)$ is an equivalence between $\add (K_\Lambda)_\Lambda$ and
  $\add {}_\Lambda \Lambda$.
  It is also an equivalence between $\add {}_\Lambda (K_\Lambda)$ and
  $\add\Lambda_\Lambda$.
  So both $\add(K_\Lambda)_\Lambda$ and $\add{}_\Lambda (K_\Lambda)$ also have
  $n$ indecomposables.
  So {\bf 1} is equivalent to $\add (K_\Lambda)_\Lambda = \add \Lambda_\Lambda$.
  {\bf 2} is equivalent to $\add {}_\Lambda (K_\Lambda) = \add {}_\Lambda\Lambda$.
  So {\bf 1$\Leftrightarrow $2} is proved simply applying
  the duality $\Hom_R(?,R)$.
\end{proof}

\paragraph
Let $(R,\fm)$ be semilocal.
If the equivalent conditions in Lemma~\ref{quasi-Frobenius.lem} are
satisfied, then we say that $\Lambda$ is {\em pseudo-quasi-Frobenius}.
If it is GCM in addition, then we say that it is
{\em quasi-Frobenius}.
These definitions are independent of the choice of $R$.
Note that $\Lambda$ is pseudo-quasi-Frobenius (resp.\ quasi-Frobenius)
if and only if $\hat\Lambda$ is so.

\begin{proposition}\label{GN.prop}
  Let $(R,\fm)$ be semilocal.
  Then the following are equivalent.
  \begin{enumerate}
  \item[\bf 1] $\Lambda$ is quasi-Frobenius.
  \item[\bf 2] $\Lambda$ is GCM, and $\dim\Lambda=\idim {}_\Lambda\Lambda$,
    where $\idim$ denotes the injective dimension.
  \item[\bf 3] $\Lambda$ is GCM, and $\dim\Lambda=\idim \Lambda_\Lambda$.
  \end{enumerate}
\end{proposition}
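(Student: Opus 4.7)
My plan is to reduce to the setting of a regular local base via completion and Noether normalization, and then exploit the explicit identification $K_\Lambda=\Hom_R(\Lambda,R)$. All three conditions are preserved under the $\fm$-adic completion—for {\bf 1} this is built into the definition of quasi-Frobenius, and for {\bf 2}, {\bf 3} this is flat base change for the injective dimension of a finite module over a Noetherian semilocal ring—so I may assume $R$ is complete. Imitating the reduction in the proof of Lemma~\ref{pseudo-Frobenius.lem}, I then replace $R$ by a Noether normalization of $R/\ann_R\Lambda$, assuming henceforth that $R$ is a complete regular local ring of dimension $d:=\dim\Lambda$ and that $\Lambda$ is $R$-faithful. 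The GCM hypothesis common to all three conditions then says $\Lambda$ is maximal Cohen--Macaulay over $R$, hence $R$-free of finite rank by Auslander--Buchsbaum; in particular $K_\Lambda=\Hom_R(\Lambda,R)=:\Lambda^*$.

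The key numerical fact needed is $\idim(K_\Lambda)_\Lambda=\idim{}_\Lambda(K_\Lambda)=d$. Because $\Lambda$ is $R$-free, every projective resolution of a finite $M\in\md\Lambda$ is simultaneously an $R$-free resolution, and the adjunction $\Hom_\Lambda(-,\Lambda^*)=\Hom_R(-,R)$ derives to $\Ext^i_\Lambda(M,K_\Lambda)\cong\Ext^i_R(M,R)$ for every $i$. Since $R$ is regular of dimension $d$, the right side vanishes for $i>d$ and is nonzero at $i=d$, $M=R/\fm$, which computes $\idim(K_\Lambda)_\Lambda=d$, and symmetrically on the left.

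For {\bf 1}$\Rightarrow${\bf 3} (and {\bf 1}$\Rightarrow${\bf 2} by the opposite-ring trick): right-projectivity of $K_\Lambda$ gives $\add(K_\Lambda)_\Lambda\subseteq\add\Lambda_\Lambda$, while left-projectivity, combined with the duality $\Hom_R(-,R)$ which, as in the proof of Lemma~\ref{quasi-Frobenius.lem}, exchanges $\add{}_\Lambda K_\Lambda\leftrightarrow\add\Lambda_\Lambda$ and $\add{}_\Lambda\Lambda\leftrightarrow\add(K_\Lambda)_\Lambda$, yields the reverse inclusion $\add\Lambda_\Lambda\subseteq\add(K_\Lambda)_\Lambda$. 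Therefore $\add\Lambda_\Lambda=\add(K_\Lambda)_\Lambda$, and consequently $\idim\Lambda_\Lambda=\idim(K_\Lambda)_\Lambda=d$.

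The hard direction is {\bf 3}$\Rightarrow${\bf 1} (and its opposite-ring counterpart {\bf 2}$\Rightarrow${\bf 1}). Granted $\idim\Lambda_\Lambda=d$, I need to show $\Lambda_\Lambda\in\add(K_\Lambda)_\Lambda$, because the bijective count of indecomposables from the proof of Lemma~\ref{quasi-Frobenius.lem} then forces $\add\Lambda_\Lambda=\add(K_\Lambda)_\Lambda$ and hence right-projectivity of $K_\Lambda$; Lemma~\ref{quasi-Frobenius.lem} then supplies its left-projectivity, yielding {\bf 1}. The route I intend is the identity $\RHom_\Lambda(-,\Lambda)\simeq\RHom_R(-\otimes^{\mathbf L}_\Lambda K_\Lambda,R)$, arising from $\Lambda\cong\RHom_R(K_\Lambda,R)$, which recasts $\idim\Lambda_\Lambda\leq d$ as a bound on the projective dimension of ${}_\Lambda K_\Lambda$. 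Dualizing a minimal projective resolution of ${}_\Lambda K_\Lambda$ via $\Hom_R(-,R)$ then yields a finite coresolution
\[
0\to\Lambda_\Lambda\to Q^0\to Q^1\to\cdots\to Q^n\to 0
\]
with every $Q^j\in\add(K_\Lambda)_\Lambda$. The principal technical obstacle is to show that this coresolution splits—so that $\Lambda$ is itself a direct summand of some $Q^j$ and hence lies in $\add K_\Lambda$—using the maximal Cohen--Macaulay property of all the terms over $R$ together with Auslander-type $\Ext^1$-vanishing.
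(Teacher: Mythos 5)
Your reduction to a complete regular local base $R$ with $\Lambda$ $R$-free, the computation $\idim(K_\Lambda)_\Lambda = d$ via $\Ext^i_\Lambda(M,K_\Lambda)\cong\Ext^i_R(M,R)$, and the direction {\bf 1}$\Rightarrow${\bf 2},{\bf 3} using $\add\Lambda_\Lambda=\add(K_\Lambda)_\Lambda$ are all sound.

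The hard direction is where your argument diverges from the paper and where the gap lies. The paper does not go through a coresolution at all: it invokes \cite[(3.6)]{GN} to reduce directly to the case where $R$ is a field, at which point ${}_\Lambda\Lambda$ injective gives $(K_\Lambda)_\Lambda=\Hom_R(\Lambda,R)$ projective, and \cite[(IV.3.7)]{SY} closes the argument. Your route instead relies on (i) extracting a finite projective dimension for ${}_\Lambda K_\Lambda$ from the isomorphism $\RHom_\Lambda(-,\Lambda)\simeq\RHom_R(-\otimes^{\mathbf L}_\Lambda K_\Lambda,R)$, and then (ii) showing the dualized coresolution $0\to\Lambda\to Q^0\to\cdots\to Q^n\to 0$ splits. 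Step (ii) is the genuine gap: you flag it yourself as ``the principal technical obstacle,'' and an appeal to ``Auslander-type $\Ext^1$-vanishing'' is not enough, since vanishing of $\Ext^1_\Lambda$ between the relevant MCM modules is exactly the kind of statement one needs $\Lambda$ to already be Gorenstein/quasi-Frobenius to get for free. Step (i) is also not a formal consequence of the $\RHom$ identity: unravelling it in general involves a hyperext spectral sequence and does not ``recast'' $\idim\Lambda_\Lambda\leq d$ as a projective dimension bound without additional input.

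There is, however, a much shorter and complete route hiding inside your $\RHom$ identity that avoids the coresolution entirely. Test it on $M=\Lambda/J$, with $J$ the Jacobson radical of $\Lambda$. Then $\Lambda/J\otimes^{\mathbf L}_\Lambda K_\Lambda$ has finite-length cohomology, so the hyperext spectral sequence over the regular local ring $R$ collapses to Matlis duality concentrated in degree $d$, giving $\Ext^{j+d}_\Lambda(\Lambda/J,\Lambda)\cong\Tor^\Lambda_j(\Lambda/J,K_\Lambda)^\vee$. Now $\idim\Lambda_\Lambda\leq d$ forces $\Tor^\Lambda_j(\Lambda/J,K_\Lambda)=0$ for all $j>0$; since $\Lambda$ is semiperfect, the standard Nakayama argument (minimal projective cover of ${}_\Lambda K_\Lambda$) then gives ${}_\Lambda K_\Lambda$ projective directly, and Lemma~\ref{quasi-Frobenius.lem} supplies projectivity of $(K_\Lambda)_\Lambda$. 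This would make your approach self-contained and independent of \cite[(3.6)]{GN}, but as written your proposal neither carries out this shortcut nor closes the coresolution-splitting gap.
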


\begin{proof}
  {\bf 1$\Rightarrow $2}.
  By definition, $\Lambda$ is GCM.
  To prove that $\dim\Lambda=\idim {}_\Lambda\Lambda$, we may assume that
  $R$ is local.
  Then by \cite[(3.5)]{GN}, we may assume that $R$ is complete.
  Replacing $R$ by the Noetherian normalization of
  $R/\ann_R \Lambda$, we may
  assume that $R$ is a complete regular local ring of dimension $d$, and
  $\Lambda$ its maximal Cohen--Macaulay (that is, finite free) module.
  As $\add {}_\Lambda \Lambda = \add {}_\Lambda (K_\Lambda)$ by
  the proof of Lemma~\ref{quasi-Frobenius.lem}, it suffices to
  prove $\idim {}_\Lambda (K_\Lambda)=d$.
  Let $\Bbb I_R$ be the minimal injective resolution of the
  $R$-module $R$.
  Then $\Bbb J=\Hom_R(\Lambda,\Bbb I_R)$
  is an injective resolution
  of $K_\Lambda=\Hom_R(\Lambda,R)$ as a left $\Lambda$-module.
  As the length of $\Bbb J$ is $d$ and
  \[
  \Ext^d_\Lambda(\Lambda/\fm\Lambda,K_\Lambda)\cong
  \Ext^d_R(\Lambda/\fm\Lambda,R)\neq 0,
  \]
  we have that $\idim{}_\Lambda (K_\Lambda)=d$.

  {\bf 2$\Rightarrow$1}.
  We may assume that $R$ is complete regular local and
  $\Lambda$ is maximal Cohen--Macaulay.
  By \cite[(3.6)]{GN}, we may further assume that $R$ is a field.
  Then ${}_\Lambda \Lambda$ is injective.
  So $(K_\Lambda)_\Lambda=\Hom_R(\Lambda,R)$ is projective,
  and $\Lambda$ is quasi-Frobenius, see \cite[(IV.3.7)]{SY}.

  {\bf 1$\Leftrightarrow$3} is proved similarly.
\end{proof}

\begin{corollary}\label{GN.cor}
  Let $R$ be arbitrary.
  Then the following are equivalent.
  \begin{enumerate}
  \item[\bf 1] For any $P\in\Spec R$, $\Lambda_P$ is quasi-Frobenius.
  \item[\bf 2] For any maximal ideal $\fm$ of $R$, $\Lambda_\fm$ is quasi-Frobenius.
  \item[\bf 3] $\Lambda$ is a Gorenstein $R$-algebra in the sense that
    $\Lambda$ is a Cohen--Macaulay $R$-module, and
    $\idim_{\Lambda_P} {}_{\Lambda_P}\Lambda_P=\dim \Lambda_P$ for
    any $P\in\Spec R$.
  \end{enumerate}
\end{corollary}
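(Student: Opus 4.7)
My plan is to combine Proposition~\ref{GN.prop} with standard localization properties to establish (1)$\Leftrightarrow$(3) directly, and then to treat (1)$\Leftrightarrow$(2) by bootstrapping from the maximal case to all primes.

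For (1)$\Leftrightarrow$(3), I would apply Proposition~\ref{GN.prop} to the module-finite $R_P$-algebra $\Lambda_P$ for each $P\in\Spec R$: since $R_P$ is local, the proposition says that $\Lambda_P$ is quasi-Frobenius if and only if $\Lambda_P$ is GCM and $\dim\Lambda_P=\idim_{\Lambda_P}{}_{\Lambda_P}\Lambda_P$. Quantified over all $P$, the GCM condition is precisely that $\Lambda$ is Cohen--Macaulay as an $R$-module (since this is defined by $\depth_{R_P}\Lambda_P=\dim_{R_P}\Lambda_P$ at every $P$), and the injective-dimension condition is the second clause of~(3).

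The implication (1)$\Rightarrow$(2) is trivial. The substantive step is (2)$\Rightarrow$(1): given $P\in\Spec R$, pick a maximal ideal $\fm\supseteq P$, so that $\Lambda_P=(\Lambda_\fm)_P$. By hypothesis $\Lambda_\fm$ is quasi-Frobenius, so Proposition~\ref{GN.prop} gives that $\Lambda_\fm$ is GCM and $\dim\Lambda_\fm=\idim_{\Lambda_\fm}{}_{\Lambda_\fm}\Lambda_\fm$. The Cohen--Macaulay property of a finite module is local, so $\Lambda_P$ is also GCM. For the injective-dimension condition I plan to invoke \cite[(3.5)]{GN}, which is precisely the localization statement already used inside the proof of Proposition~\ref{GN.prop}; applying Proposition~\ref{GN.prop} back at $\Lambda_P$ then yields that $\Lambda_P$ is quasi-Frobenius. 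This closes the loop $\mathbf{2}\Rightarrow\mathbf{1}\Leftrightarrow\mathbf{3}$.

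The one potentially delicate step is the localization of the injective dimension from $\fm$ to an arbitrary sub-prime $P$ in the non-commutative setting, and my plan is to cite \cite{GN} rather than reprove it here. Everything else is routine bookkeeping: the two halves of the Gorenstein condition~(3) match the two halves of Proposition~\ref{GN.prop} prime-by-prime, and the only real content is that both halves localize compatibly with the passage from maximal ideals to arbitrary primes.
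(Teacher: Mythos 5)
Your overall architecture is sound and essentially parallels the paper's: you apply Proposition~\ref{GN.prop} prime by prime to identify quasi-Frobeniusness of $\Lambda_P$ with the two halves of condition~(3), and the only substantive step is pushing the injective-dimension equality from maximal ideals down to arbitrary primes.

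Where you go wrong is the citation for that step. You claim that \cite[(3.5)]{GN} is \emph{``precisely the localization statement already used inside the proof of Proposition~\ref{GN.prop}''}, but that is a misreading: in the proof of Proposition~\ref{GN.prop}, (3.5) is invoked as a \emph{completion} result (``we may assume that $R$ is complete''), not a localization result, and it will not help you pass from a maximal ideal $\fm$ to a sub-prime $P\subset\fm$. The result you actually need is \cite[(4.7)]{GN}, which is what the paper itself cites in its proof of (2)$\Rightarrow$(3): given that $\Lambda$ is a Cohen--Macaulay $R$-module and $\idim_{\Lambda_\fm}{}_{\Lambda_\fm}\Lambda_\fm=\dim\Lambda_\fm$ for every maximal ideal $\fm$, it concludes that $\Lambda$ is a Gorenstein $R$-algebra, i.e., that the equality holds at every prime. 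With that citation corrected, your argument goes through; it is the paper's proof reorganized to establish (1)$\Leftrightarrow$(3) directly and then close the loop via (2)$\Rightarrow$(1), rather than the paper's route (2)$\Rightarrow$(3)$\Rightarrow$(1).
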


\begin{proof}
  {\bf 1$\Rightarrow$2} is trivial.

  {\bf 2$\Rightarrow$3}.
  By Proposition~\ref{GN.prop}, we have
  $\idim {}_{\Lambda_\fm}\Lambda_\fm=\dim \Lambda_\fm$
  for each $\fm$.
  Then by \cite[(4.7)]{GN}, $\Lambda$ is a Gorenstein $R$-algebra.

  {\bf 3$\Rightarrow$1} follows from Proposition~\ref{GN.prop}.
\end{proof}
  
\paragraph
Let $R$ be arbitrary.
We say that $\Lambda$ is a {\em quasi-Gorenstein} $R$-algebra if
$\Lambda_P$ is pseudo-quasi-Frobenius for each $P\in\Spec R$.

\begin{definition}[Scheja--Storch \cite{SS}]
Let $R$ be general.
We say that $\Lambda$ is symmetric (resp.\ Frobenius)
relative to
$R$ if $\Lambda$ is $R$-projective, and $\Lambda^*:=\Hom_R(\Lambda,R)$ is
isomorphic to $\Lambda$ as a $\Lambda$-bimodule (resp.\ as a right $\Lambda$-module).
It is called quasi-Frobenius relative to $R$
if the right $\Lambda$-module $\Lambda^*$ is projective.
\end{definition}

\begin{lemma}\label{SS.lem}
  Let $(R,\fm)$ be local.
  \begin{enumerate}
  \item[\bf 1] If $\dim \Lambda=\dim R$, $R$ is quasi-Gorenstein, and
    $\Lambda^* \cong \Lambda$ as $\Lambda$-bimodules
    \(resp.\ $\Lambda^*\cong\Lambda$ as right $\Lambda$-modules,
    $\Lambda^*$ is projective as a right $\Lambda$-module\), then
    $\Lambda$ is quasi-symmetric \(resp.\ pseudo-Frobenius, pseudo-quasi-Frobenius\).
  \item[\bf 2] If $R$ is Gorenstein and $\Lambda$ is symmetric \(resp.\
    Frobenius, quasi-Frobenius\) relative to $R$, then $\Lambda$ is symmetric
    \(resp.\ Frobenius, quasi-Frobenius\).
  \item[\bf 3] If $\Lambda$ is nonzero and $R$-projective, then
    $\Lambda$ is quasi-symmetric \(resp.\ pseudo-Frobenius,
    pseudo-quasi-Frobenius\) if and only if $R$ is quasi-Gorenstein and
    $\Lambda$ is symmetric \(resp.\ Frobenius, quasi-Frobenius\)
    relative to $R$.
  \item[\bf 4] If $\Lambda$ is nonzero and $R$-projective, then
    $\Lambda$ is symmetric \(resp.\ Frobenius, quasi-Frobenius\) if and only if
    $R$ is Gorenstein and $\Lambda$ is symmetric \(resp.\ Frobenius, quasi-Frobenius\)
    relative to $R$.
  \end{enumerate}
\end{lemma}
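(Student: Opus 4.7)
The plan is to reduce everything to the identification
\[
K_\Lambda \;\cong\; \Lambda^*:=\Hom_R(\Lambda,R)
\]
as $\Lambda$-bimodules, valid under the hypotheses of part {\bf 1}. Once this is established, {\bf 1} is immediate: each of the three hypotheses on $\Lambda^*$ (isomorphic to $\Lambda$ as bimodules; isomorphic as right modules; projective as right module) transfers directly to the corresponding condition on $K_\Lambda$, which is exactly the definition of quasi-symmetric, pseudo-Frobenius, and pseudo-quasi-Frobenius respectively.

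To prove the key identification, first pass to the $\fm$-adic completion: the hypotheses all descend, since flat base change gives $\widehat{\Lambda^*}\cong\hat\Lambda^*$ and $\hat R$ remains local quasi-Gorenstein with $\dim\hat\Lambda=\dim\hat R$. Choose a Noether normalization $A\hookrightarrow\hat R$ with $A$ complete regular local of dimension $d=\dim R$. Quasi-Gorensteinness of $\hat R$ forces equidimensionality (since $\Ass\hat R=\Ass K_{\hat R}$ consists of the equidimensional minimal primes), so every minimal prime of $\hat R$ meets $A$ trivially; hence $A\to\hat R/\ann_{\hat R}\hat\Lambda$ is injective, making $\hat\Lambda$ a faithful $A$-module of dimension $d$. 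Since $A$ is regular with normalized dualizing complex $A[d]$ and $\Ext^{<0}_A(\hat\Lambda,A)=0$, we obtain $K_{\hat\Lambda}=\Hom_A(\hat\Lambda,A)$. Hom--tensor adjunction gives
\[
\Hom_A(\hat\Lambda,A)\;\cong\;\Hom_{\hat R}(\hat\Lambda,\Hom_A(\hat R,A))\;=\;\Hom_{\hat R}(\hat\Lambda,K_{\hat R}),
\]
and $K_{\hat R}\cong\hat R$ (from quasi-Gorensteinness of $R$) finishes the identification as $\hat\Lambda$-bimodules.

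Parts {\bf 2}, {\bf 3}, {\bf 4} then follow formally. For {\bf 2}, a nonzero $R$-projective $\Lambda$ is $R$-free, so $\dim\Lambda=\dim R$, and Gorensteinness of $R$ gives $\depth_R\Lambda=\depth R=\dim R$, so $\Lambda$ is GCM; applying {\bf 1} upgrades the relative conditions to the absolute ones. For {\bf 3}, $(\Leftarrow)$ is immediate from {\bf 1}; for $(\Rightarrow)$ the same Noether-normalization computation yields $K_\Lambda\cong\Hom_R(\Lambda,K_R)\cong K_R^n$ as $R$-modules, with $n=\rank_R\Lambda\geq 1$. In each of the three subcases the absolute hypothesis forces $K_\Lambda$ to be $R$-projective (immediate when $\Lambda\cong K_\Lambda$; for pseudo-quasi-Frobenius, $K_\Lambda$ is a $\Lambda$-direct summand of some $\Lambda^m$, hence $R$-projective since $\Lambda$ is). Thus $K_R^n$ is $R$-free, so $K_R$ is $R$-free of some rank $r$; comparing lengths at a minimal prime $p$ of $R$ (using $\length_{R_p}K_{R_p}=\length_{R_p}R_p$, which follows from the Matlis description of the canonical module of the Artinian local $R_p$) yields $r=1$, whence $K_R\cong R$ and $R$ is quasi-Gorenstein. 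The absolute condition on $K_\Lambda$ then transports to the corresponding relative condition on $\Lambda^*$. Part {\bf 4} combines {\bf 3} with the observation that, for $R$-free $\Lambda$, being GCM is equivalent to $R$ being Cohen--Macaulay, and $R$ quasi-Gorenstein plus Cohen--Macaulay equals $R$ Gorenstein.

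The main technical obstacle is the identification $K_\Lambda\cong\Lambda^*$ in {\bf 1}: one must verify faithfulness of $\hat\Lambda$ over $A$ from equidimensionality of the quasi-Gorenstein $\hat R$, track the full $\Lambda$-bimodule structure through the adjunction, and exploit the regularity of $A$ (rather than mere quasi-Gorensteinness of $R$) to identify $K_{\hat\Lambda}$ with a zeroth Ext---the dualizing complex $\Bbb I_{\hat R}$ itself may have cohomology in several degrees, and the computation would be much more involved if carried out directly over $\hat R$. A secondary subtlety is the rank-one conclusion for $K_R$ in {\bf 3}, which relies on the generic-length identity at minimal primes.
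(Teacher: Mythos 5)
Your proposal is correct and takes essentially the same route as the paper: everything is reduced, after completion, to the identification $K_\Lambda\cong\Hom_R(\Lambda,K_R)\cong\Lambda^*$ (the paper gets this directly from $K_\Lambda=\Ext^{-d}_R(\Lambda,\Bbb I)\cong\Hom_R(\Lambda,H^{-d}(\Bbb I))$ rather than via a Noether normalization and adjunction, but this is the same idea), and parts {\bf 2}--{\bf 4} are then deduced exactly as in the paper. Your length count at a minimal prime in part {\bf 3} simply makes explicit the step ``$K_R$ projective, hence $K_R\cong R$'' that the paper asserts without detail, and your appeal to equidimensionality in part {\bf 1} is harmless but unnecessary (faithfulness over the normalization already follows from $\dim\hat\Lambda=\dim A$ and $A$ being a domain).
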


\begin{proof}
  We can take the completion, and we may assume that $R$ is complete local.
  
  {\bf 1}.
  Let $d=\dim \Lambda=\dim R$, and let $\Bbb I$ be the normalized dualizing complex
  (see \cite[(5.2)]{Hashimoto2}) of $R$.
  Then
  \[
  K_\Lambda= \Ext^{-d}_R(\Lambda,\Bbb I)\cong \Hom_R(\Lambda,H^{-d}(\Bbb I))
  \cong \Hom(\Lambda,K_R)\cong \Hom(\Lambda,R)=\Lambda^*
  \]
  as $\Lambda$-bimodules, and the result follows.

  {\bf 2}.
  We may assume that $\Lambda$ is nonzero.
  As $R$ is Cohen--Macaulay and $\Lambda$ is a finite projective $R$-module,
  $\Lambda$ is a maximal Cohen--Macaulay $R$-module.
  By {\bf 1}, the result follows.

  {\bf 3}.
  The \lq if' part follows from {\bf 1}.
  We prove the \lq only if' part.
  As $\Lambda$ is $R$-projective and nonzero, $\dim\Lambda=\dim R$.
  As $\Lambda$ is $R$-finite free, $K_\Lambda\cong \Hom_R(\Lambda,K_R)\cong
  \Lambda^*\otimes_R K_R$.
  As $K_\Lambda$ is $R$-free and $\Lambda^*\otimes_R K_R$ is nonzero and is isomorphic to
  a direct sum of copies of $K_R$, we have that $K_R$ is $R$-projective,
  and hence $R$ is quasi-Gorenstein, and $K_R\cong R$.
  Hence $K_\Lambda\cong \Lambda^*$, and the result follows.

  {\bf 4} follows from {\bf 3} easily.
\end{proof}

\paragraph
Let $(R,\fm)$ be semilocal.
Let a finite group $G$ act on $\Lambda$ by $R$-algebra automorphisms.
Let $\Omega=\Lambda*G$, the twisted group algebra.
That is, $\Omega=\Lambda\otimes_R RG=\bigoplus_{g\in G}\Lambda g$
as an $R$-module, and
the product of $\Omega$ is given by $(ag)(a'g')=(a(ga'))(gg')$
for $a,a'\in\Lambda$ and $g,g'\in G$.
This makes $\Omega$ a module-finite $R$-algebra.

\paragraph
We simply call an $RG$-module a $G$-module.
We say that $M$ is a $(G,\Lambda)$-module if $M$ is a $G$-module,
$\Lambda$-module, the $R$-module structures coming from that of the
$G$-module structure and the $\Lambda$-module structure agree, and
$g(am)=(ga)(gm)$ for $g\in G$, $a\in \Lambda$, and $m\in M$.
A $(G,\Lambda)$-module and an $\Omega$-module are one and the same thing.

\paragraph
By the action $((a\otimes a')g)a_1=a(ga_1)a'$,
we have that $\Lambda$ is a $(\Lambda\otimes\Lambda\op)*G$-module
in a natural way.
So it is an $\Omega$-module by the action $(ag)a_1=a(ga_1)$.
It is also a right $\Omega$-module by the action $a_1(ag)=g^{-1}(a_1a)$.
If the action of $G$ on $\Lambda$ is trivial, then these actions
make an $\Omega$-bimodule.

\paragraph
Given an $\Omega$-module $M$ and an $RG$-module $V$,
$M\otimes_R V$ is an $\Omega$-module by
$(ag)(m\otimes v)=(ag)m\otimes gv$.
$\Hom_R(M,V)$ is a right $\Omega$-module by
$(\varphi(ag))(m)= g^{-1}\varphi(a(gm))$.
It is easy to see that the standard isomorphism
\[
\Hom_R(M\otimes_R V,W)\rightarrow \Hom_R(M,\Hom_R(V,W))
\]
is an isomorphism of right $\Omega$-modules for a left $\Omega$-module
$M$ and $G$-modules $V$ and $W$.

\paragraph
Now consider the case $\Lambda=R$.
Then the pairing $\phi: RG\otimes_R RG \rightarrow R$ given by
$\phi(g\otimes g')=\delta_{gg',e}$ (Kronecker's delta)
is non-degenerate, and induces
an $RG$-bimodule isomorphism $\Omega=RG\rightarrow (RG)^*=\Omega^*$.
As $\Omega=RG$ is a finite free $R$-module, we have that
$\Omega=RG$ is symmetric relative to $R$.

\begin{lemma}\label{quasi-symmetric.lem}
If $\Lambda$ is quasi-symmetric \(resp.\ symmetric\) and the
action of $G$ on $\Lambda$ is trivial,
then $\Omega$ is quasi-symmetric \(resp.\ symmetric\).
\end{lemma}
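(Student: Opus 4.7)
The plan is to imitate the reductions used earlier in the section. Since $\Lambda$ is quasi-symmetric (resp.\ symmetric) if and only if $\hat\Lambda$ is so, and the same property holds for $\Omega$, we may replace $R$ by its $\fm$-adic completion and assume $R$ is complete. Then, replacing $R$ by a Noether normalization of $R/\ann_R\Lambda$, we may further assume that $R$ is a complete regular local ring and that $\Lambda$ is a faithful $R$-module. Under the triviality of the $G$-action on $\Lambda$, the twisted group algebra $\Omega=\Lambda*G$ coincides with the tensor product of $R$-algebras $\Omega=\Lambda\otimes_R RG$; in particular, $\Omega$ is $R$-finite and faithful (since $\Lambda\hookrightarrow\Omega$ as an $R$-submodule via $a\mapsto ae$), so $\dim\Omega=\dim R$.

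In this reduced situation, each canonical module is the naive $R$-linear dual: $K_\Lambda\cong\Lambda^*:=\Hom_R(\Lambda,R)$ as $\Lambda$-bimodules, $K_{RG}\cong(RG)^*$ as $RG$-bimodules (the bimodule isomorphism $RG\cong(RG)^*$ supplied by the nondegenerate pairing $\phi(g\otimes g')=\delta_{gg',e}$ is established in the paragraph immediately preceding the lemma), and $K_\Omega\cong\Omega^*$ as $\Omega$-bimodules. Because $RG$ is $R$-finite free, the natural evaluation map
\[
\Lambda^*\otimes_R(RG)^* \;\longrightarrow\; (\Lambda\otimes_R RG)^*, \qquad f\otimes\chi \;\longmapsto\; \bigl(a\otimes h\mapsto f(a)\chi(h)\bigr)
\]
is an isomorphism of $\Omega$-bimodules. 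Combining the hypothesis $\Lambda\cong\Lambda^*$ as $\Lambda$-bimodules with the already-established $RG\cong(RG)^*$ as $RG$-bimodules yields
\[
\Omega \;=\; \Lambda\otimes_R RG \;\cong\; \Lambda^*\otimes_R(RG)^* \;\cong\; (\Lambda\otimes_R RG)^* \;=\; \Omega^* \;\cong\; K_\Omega
\]
as $\Omega$-bimodules, so $\Omega$ is quasi-symmetric.

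For the symmetric case, we additionally need $\Omega$ to be GCM. But $\Omega=\Lambda\otimes_R RG$ is $\Lambda$-free of rank $|G|$ as an $R$-module, hence $\depth_R\Omega=\depth_R\Lambda=\dim\Lambda=\dim\Omega$, where the middle equality is the GCM hypothesis on $\Lambda$; hence $\Omega$ is GCM.

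The main point requiring care is the verification that tensor products of bimodule isomorphisms assemble into $\Omega$-bimodule isomorphisms, i.e.\ that the three displayed isomorphisms above are really of $\Omega$-bimodules and not merely of $R$-modules. This is precisely the place where the triviality of the $G$-action on $\Lambda$ enters: it is what makes $\Omega$ coincide with $\Lambda\otimes_R RG$ as an $R$-algebra, so that a compatible pair of a $\Lambda$-bimodule and an $RG$-bimodule structure lifts canonically to an $\Omega$-bimodule structure. Once this is set up, checking the displayed map is an $\Omega$-bimodule homomorphism is a routine unwinding of the definitions.
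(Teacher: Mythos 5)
Your proof is correct and is essentially the paper's argument with the details filled in: the paper reduces to $R$ complete regular local with $\Lambda$ faithful just as you do, and then simply asserts that $\Omega=\Lambda\otimes_R RG$ is quasi-symmetric (resp.\ symmetric) ``as can be seen easily'' --- the step you make explicit via $K_\Omega\cong\Omega^*\cong\Lambda^*\otimes_R(RG)^*\cong\Lambda\otimes_R RG=\Omega$ as $\Omega$-bimodules, using the isomorphism $RG\cong(RG)^*$ established in the preceding paragraph. Your observation that the tensor-of-duals map is an $\Omega$-bimodule isomorphism and that triviality of the $G$-action is what makes $\Omega$ a genuine tensor product of $R$-algebras is exactly the content the paper leaves to the reader.
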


\begin{proof}
  Taking the completion, we may assume that $R$ is complete.
  Then replacing $R$ by a Noether normalization of $R/\ann_R \Lambda$,
  we may assume that $R$ is a regular local ring, and
  $\Lambda$ is a faithful $R$-module.
  As the action of $G$ on $\Lambda$ is trivial, 
  $\Omega=\Lambda\otimes_R RG$ is quasi-symmetric (resp.\ symmetric),
  as can be seen easily.
\end{proof}

\paragraph
In particular, if $\Lambda$ is commutative quasi-Gorenstein
(resp.\ Gorenstein) and the action of $G$ on $\Lambda$ is
trivial, then
$\Omega=\Lambda G$ is quasi-symmetric (resp.\ symmetric).

\paragraph
In general, ${}_\Omega\Omega\cong \Lambda\otimes_R RG$ as $\Omega$-modules.

\begin{lemma}\label{isomorphic-Lambda-Omega.lem}
  Let $M$ and $N$ be right $\Omega$-modules, and let 
  $\varphi:M\rightarrow N$ be a homomorphism of right $\Lambda$-modules.
  Then $\psi:M\otimes RG\rightarrow N\otimes RG$ given by
  $\psi(m\otimes g)=g(\varphi(g^{-1}m))\otimes g$ is an $\Omega$-homomorphism.
  In particular,
  \begin{enumerate}
  \item[\bf 1] If $\varphi$ is a $\Lambda$-isomorphism, then
    $\psi$ is an $\Omega$-isomorphism.
  \item[\bf 2] If $\varphi$ is a split monomorphism in $\md \Lambda$,
    then $\psi$ is a split monomorphism in $\md\Omega$.
  \end{enumerate}
\end{lemma}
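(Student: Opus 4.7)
The plan is to verify that $\psi$ is an $\Omega$-homomorphism; assertions \textbf{1} and \textbf{2} will then follow quickly by applying this main claim with $\varphi$ replaced by $\varphi^{-1}$ and by a $\Lambda$-linear retraction of $\varphi$, respectively. Since $\Omega$ is generated as an $R$-algebra by its subsets $\Lambda\cdot 1_G$ and $1\cdot G$, it suffices to check $\Omega$-linearity of $\psi$ on the actions of $a\in\Lambda$ and $h\in G$ separately. With respect to the right $\Omega$-structure on $M\otimes RG$ determined by $(m\otimes g)\cdot a=ma\otimes g$ and $(m\otimes g)\cdot h=mh\otimes gh$ (and similarly for $N\otimes RG$), the $\Lambda$-compatibility of $\psi$ reduces, after unpacking, to the right $\Lambda$-linearity of $\varphi$ combined with the twisted commutation $ag^{-1}=g^{-1}(ga)$ holding in $\Omega$, which is exactly the identity absorbed by the conjugating factors $g$ and $g^{-1}$ in the definition of $\psi$. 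For the $G$-compatibility, the decisive observation is that the argument $g^{-1}m$ of $\varphi$ is invariant under the replacement $(m,g)\mapsto(mh,gh)$, since $h(gh)^{-1}=g^{-1}$; hence $\psi((m\otimes g)\cdot h)$ and $\psi(m\otimes g)\cdot h$ agree up to the expected right multiplication by $h$ on the outer $g$ and on the second tensor factor, and the equality is immediate.

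For \textbf{1}, let $\varphi^{-1}\colon N\to M$ be the $\Lambda$-linear inverse and set $\psi'(n\otimes g)=g(\varphi^{-1}(g^{-1}n))\otimes g$. By the main claim applied to $\varphi^{-1}$, $\psi'$ is an $\Omega$-homomorphism, and a simple-tensor computation using $g(g^{-1}(\cdot))=(\cdot)$ and $\varphi^{-1}\varphi=\id_M$ yields $\psi'\psi=\id$ and $\psi\psi'=\id$. For \textbf{2}, choose a $\Lambda$-linear retraction $\pi\colon N\to M$ of $\varphi$ and set $\tilde\psi(n\otimes g)=g(\pi(g^{-1}n))\otimes g$; the main claim gives that $\tilde\psi$ is an $\Omega$-homomorphism, and
\[
\tilde\psi(\psi(m\otimes g))=g(\pi(g^{-1}(g(\varphi(g^{-1}m)))))\otimes g=g(\pi(\varphi(g^{-1}m)))\otimes g=g(g^{-1}m)\otimes g=m\otimes g
\]
shows that $\tilde\psi$ is an $\Omega$-retraction of $\psi$, so $\psi$ is split monic in $\md\Omega$.

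The chief obstacle is the careful bookkeeping of the right $\Omega$-action on $M\otimes RG$ together with the twisted commutation in $\Omega$ that the conjugates $g(\cdot)g^{-1}$ in the definition of $\psi$ are designed to absorb; once these are unwound, all remaining calculations are routine symbol manipulations, and the passage from the main claim to assertions \textbf{1} and \textbf{2} is formal.
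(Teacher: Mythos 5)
Your overall plan is sound, and your derivations of {\bf 1} and {\bf 2} from the main claim are formally correct; the issue is in the main claim's $G$-compatibility check, which is exactly the delicate point the lemma is designed to handle.

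You declare the right $\Omega$-module structure on $M\otimes RG$ to be $(m\otimes g)\cdot h=mh\otimes gh$. That is not the structure the lemma is using. The structure is the one under which $K_\Lambda\otimes RG$ inherits its right $\Omega$-module structure from $K_\Omega\cong\Hom_R(\Omega,R)$ (this is how the lemma is applied in the Proposition that follows), and there the $G$-index transforms by \emph{left} multiplication by $g^{-1}$: $(m\otimes g_1)(ag)=m(ag)\otimes g^{-1}g_1$. (Sanity check: with this structure, $a_1\otimes g_1\mapsto g_1^{-1}(a_1)\,g_1^{-1}$ gives $\Lambda\otimes RG\cong\Omega_\Omega$, where $\Lambda$ carries the right $\Omega$-action $a_1(ag)=g^{-1}(a_1a)$.) Your structure is isomorphic to this one via $m\otimes g\mapsto m\otimes g^{-1}$, but the \emph{formula} for $\psi$ is tied to a fixed structure, so you cannot mix them.

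Relatedly, your ``decisive observation'' --- that $g^{-1}m$ is unchanged under $(m,g)\mapsto(mh,gh)$ because $h(gh)^{-1}=g^{-1}$ --- only holds if you read $g^{-1}m$ as $m\cdot g^{-1}$; then $(gh)^{-1}(mh)=(mh)(gh)^{-1}=m\,h(gh)^{-1}=mg^{-1}$. But $g\mapsto(m\mapsto mg)$ is an anti-action, not a left $G$-action. With the usual left action obtained from a right module, $g\cdot m:=mg^{-1}$, one gets $g^{-1}m=mg$, and then $(gh)^{-1}(mh)=(mh)(gh)=m(hgh)\neq mg$ in general, so the ``invariance'' fails. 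Under that standard reading, $\psi$ is $\Lambda\*G$-linear for the structure $(m\otimes g_1)h=mh\otimes h^{-1}g_1$, not for the one you wrote: one computes $\psi\bigl((m\otimes g)\cdot h\bigr)=\psi(mh\otimes h^{-1}g)=\varphi(mg)g^{-1}h\otimes h^{-1}g=\psi(m\otimes g)\cdot h$, using that $(mh)(h^{-1}g)=mg$ and $(h^{-1}g)^{-1}=g^{-1}h$.

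In short: the pair (module structure you wrote, standard left-action convention) does \emph{not} make $\psi$ equivariant, and the justification you offer for $G$-compatibility relies on reading $g^{-1}m$ as $mg^{-1}$ without saying so. Pick one convention, state it, and redo the $G$-step; with either consistent choice the computation does go through, and parts {\bf 1} and {\bf 2} then follow exactly as you argue. (The paper itself records the proof only as ``Straightforward,'' so there is no route to compare against; the caution here is about internal consistency.)
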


\begin{proof}
Straightforward.
\end{proof}

\begin{proposition}
  Let $G$ be a finite group acting on $\Lambda$.
  Set $\Omega:=\Lambda*G$.
  \begin{enumerate}
  \item[\bf 1] If the action of $G$ on $\Lambda$ is trivial and
    $\Lambda$ is quasi-symmetric \(resp.\ symmetric\), then so is $\Omega$.
  \item[\bf 2] If $\Lambda$ is pseudo-Frobenius \(resp.\ Frobenius\),
    then so is $\Omega$.
  \item[\bf 3] If $\Lambda$ is pseudo-quasi-Frobenius \(resp.\
    quasi-Frobenius\), then so is $\Omega$.
  \end{enumerate}
\end{proposition}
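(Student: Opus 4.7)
Part 1 is an immediate consequence of Lemma~\ref{quasi-symmetric.lem}, since the $G$-action on $\Lambda$ is assumed trivial.

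For parts 2 and 3, the plan is to reduce to the complete case and then identify $K_\Omega$ explicitly in terms of $K_\Lambda$. Completing at $\fm$, we may assume $R$ is complete semilocal. Since $\Omega \cong \Lambda \otimes_R RG$ as an $R$-module and $RG$ is $R$-free of finite rank, we have $\dim\Omega=\dim\Lambda$ and $\depth_R\Omega=\depth_R\Lambda$, so the GCM condition transfers freely between $\Lambda$ and $\Omega$; this handles the GCM enhancements (Frobenius, quasi-Frobenius) once the pseudo-versions are settled. Using $RG$-freeness together with the symmetric pairing $\phi(g\otimes g')=\delta_{gg',e}$ giving $(RG)^*\cong RG$, a Hom--tensor adjunction (in the form highlighted in the paragraph preceding Lemma~\ref{quasi-symmetric.lem}) yields, on identifying lowest non-vanishing $\Ext$'s against the normalized dualizing complex,
\[
K_\Omega \;\cong\; K_\Lambda \otimes_R RG
\]
as right $\Omega$-modules, where the right $\Omega$-structure on the right-hand side is the tensor construction of Lemma~\ref{isomorphic-Lambda-Omega.lem} applied to the natural right $\Omega$-structure on $K_\Lambda$. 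Analogously, and more directly, $\Omega \cong \Lambda \otimes_R RG$ as right $\Omega$-modules, with $\Lambda$ carrying its natural right $\Omega$-structure $a_1(ag)=g^{-1}(a_1a)$ recorded just before Lemma~\ref{quasi-symmetric.lem}.

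Given these two identifications, both remaining cases are short. For part 2, a right $\Lambda$-isomorphism $\Lambda\cong K_\Lambda$ produces a right $\Omega$-isomorphism $\Omega\cong K_\Omega$ by Lemma~\ref{isomorphic-Lambda-Omega.lem}(1), so $\Omega$ is pseudo-Frobenius. For part 3, projectivity of $K_\Lambda$ in $\md\Lambda$ means that $K_\Lambda$ is a direct summand of some $\Lambda^n$ in $\md\Lambda$; tensoring that split monomorphism with $RG$, Lemma~\ref{isomorphic-Lambda-Omega.lem}(2) shows that $K_\Omega$ is a direct summand of $\Omega^n$ in $\md\Omega$, hence projective. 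Adding the GCM condition then yields the Frobenius and quasi-Frobenius assertions.

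The main technical point I expect to be the obstacle is the compatibility check for the right $\Omega$-module structures in the isomorphism $K_\Omega \cong K_\Lambda\otimes_R RG$: one must verify that, under the chain of adjunctions, the induced right $\Omega$-action on $K_\Lambda\otimes_R RG$ coming from $K_\Omega$ matches the tensor construction of Lemma~\ref{isomorphic-Lambda-Omega.lem} applied to the right $\Omega$-action on $K_\Lambda$ (the latter extending the right $\Lambda$-structure through the $G$-action on $K_\Lambda=\Hom_R(\Lambda,R)$ after a further reduction to $R$ regular local with $\dim R=\dim\Lambda$). An alternative route, which avoids this bookkeeping in the GCM cases only, is to reduce to $R$ complete regular local with $\Lambda$ (and hence $\Omega$) $R$-free, use Lemma~\ref{SS.lem}(4) to translate into the relative notions of Scheja--Storch, and invoke \cite[(3.2)]{SS}; but since $R$-projectivity of $\Lambda$ cannot be arranged in the pseudo-generality of parts 2 and 3, the tensor-with-$RG$ argument via Lemma~\ref{isomorphic-Lambda-Omega.lem} is the one I would ultimately rely on.
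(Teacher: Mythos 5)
Your proposal is correct and follows essentially the same route as the paper: part 1 via Lemma~\ref{quasi-symmetric.lem}, and for parts 2 and 3 a reduction to the complete (regular local, faithful) case, the identification $K_\Omega\cong K_\Lambda\otimes_R RG$ as right $\Omega$-modules via $\Hom_R(\Lambda\otimes_R RG,R)\cong\Hom_R(\Lambda,R)\otimes(RG)^*$, and then Lemma~\ref{isomorphic-Lambda-Omega.lem}, parts {\bf 1} and {\bf 2}, with the GCM condition transferring since $\Omega\cong\Lambda^{|G|}$ as $R$-modules. The compatibility of $\Omega$-structures you flag as the main technical point is exactly what the paper's displayed chain of isomorphisms asserts (and is a routine check), so there is no gap.
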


\begin{proof}
  {\bf 1} is Lemma~\ref{quasi-symmetric.lem}.
  To prove {\bf 2} and {\bf 3}, we may assume that $(R,\fm)$ is
  complete regular local and $\Lambda$ is a faithful module.

  {\bf 2}.
  \[
  (K_\Omega)_\Omega
  \cong
  \Hom_R(\Lambda\otimes_R RG,R)\cong \Hom_R(\Lambda,R)\otimes(RG)^*
  \cong
  K_\Lambda\otimes RG
  \]
  as right $\Omega$-modules.
  It is isomorphic to $\Lambda_\Omega\otimes RG\cong \Omega_\Omega$
  by Lemma~\ref{isomorphic-Lambda-Omega.lem}, {\bf 1}, since
  $K_\Lambda\cong \Lambda$ in $\md\Lambda$.
  Hence $\Omega$ is pseudo-Frobenius.
  If, in addition, $\Lambda$ is Cohen--Macaulay, then $\Omega$ is also
  Cohen--Macaulay, and hence $\Omega$ is Frobenius.

  {\bf 3} is proved similarly, using
  Lemma~\ref{isomorphic-Lambda-Omega.lem}, {\bf 2}.
\end{proof}

Note that the assertions for Frobenius and quasi-Frobenius properties also
follow easily from Lemma~\ref{SS.lem} and \cite[(3.2)]{SS}.

\section{Codimension-two argument}\label{codim-two.sec}

\paragraph
This section is the second part of this paper.
In this section,
we show that the codimension-two argument using the
existence of $2$-canonical modules in \cite{Hashimoto12} is still valid in
non-commutative settings, as announced in (\ref{second-part.par}).

\paragraph
Let $X$ be a locally Noetherian scheme, $U$ its open subscheme,
and $\Lambda$ a coherent $\O_X$-algebra.
Let $i:U\hookrightarrow X$ be the inclusion.

\paragraph
Let $\M\in\md\Lambda$.
That is, $\M$ is a coherent right $\Lambda$-module.
Then by restriction, $i^*\M\in \md i^*\Lambda$.

\paragraph
  For a quasi-coherent $i^*\Lambda$-module $\N$, we have an action
  \[
  i_*\N\otimes_{\O_X}\Lambda
  \xrightarrow{1\otimes u} i_*\N \otimes_{\O_X}i_*i^*\Lambda
  \rightarrow i_*(\N\otimes_{\O_U} i^*\Lambda)\xrightarrow{a} i_*\N,
  \]
  where $u$ is the unit map for the adjoint pair $(i^*,i_*)$.
  So we get a functor $i_*:\Mod i^*\Lambda \rightarrow \Mod \Lambda$, where
  $\Mod i^*\Lambda$ (resp.\ $\Mod \Lambda$) denote the category of quasi-coherent
  $i^*\Lambda$-modules (resp.\ $\Lambda$-modules).

\begin{lemma}\label{S_2-isom.lem}
  Let the notation be as above.
  Assume that $U$ is large in $X$ \(that is, $\codim_X(X\setminus U)\geq 2$\).
  If $\M\in (S_2')^{\Lambda\op,X}$, then the canonical map
    $u:\M\rightarrow i_*i^*\M$ is an isomorphism.
\end{lemma}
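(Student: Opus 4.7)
The plan is to embed $u$ into the standard four-term exact sequence
\[
0 \longrightarrow \uGamma_Z(\M) \longrightarrow \M \xrightarrow{u} i_*i^*\M \longrightarrow R^1\uGamma_Z(\M) \longrightarrow 0,
\]
where $Z:=X\setminus U$ and $\uGamma_Z$ denotes the subsheaf of sections supported in $Z$ (this sequence comes from the distinguished triangle $R\uGamma_Z(\M)\to \M\to Ri_*i^*\M$ together with the vanishing of the higher cohomology sheaves of $\M$), and then to show that the two outer terms vanish. Because whether $u$ is an isomorphism is tested at the level of underlying $\O_X$-modules, and because $(S_n')^{\Lambda\op,X}$ is defined using only the $\O_X$-depth of the stalks, one may forget $\Lambda$ throughout and work purely with coherent $\O_X$-modules.

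The question being local on $X$, it suffices to check $\uGamma_Z(\M)_x=0$ and $R^1\uGamma_Z(\M)_x=0$ for every $x\in X$. For $x\notin Z$ this is automatic, so assume $x\in Z$, and write $I_x\subseteq\O_{X,x}$ for the stalk of the ideal sheaf of $Z$. The two stalks in question are then $H^0_{I_x}(\M_x)$ and $H^1_{I_x}(\M_x)$, so it suffices to prove $\depth(I_x,\M_x)\geq 2$. The points $y\in Z$ generizing $x$ correspond bijectively to primes $P\supseteq I_x$ of $\O_{X,x}$, with $(\O_{X,x})_P=\O_{X,y}$ and $(\M_x)_P=\M_y$. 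The codimension hypothesis forces $\dim\O_{X,y}\geq 2$ at every such $y$, so by $(S_2')$ we have $\depth_{\O_{X,y}}\M_y\geq 2$; equivalently $\depth (\M_x)_P\geq 2$ for every $P\in V(I_x)$.

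To finish, I will invoke the classical fact that for a finitely generated module $M$ over a Noetherian local ring $R$ and an ideal $I$, the inequalities $\depth M_P\geq n$ for every $P\in V(I)$ imply $\depth(I,M)\geq n$, applied with $n=2$. The argument is inductive: if $\depth(I,M)=0$ then some associated prime $P$ of $M$ would contain $I$, contradicting $\depth M_P\geq 1$; prime avoidance then produces an $M$-regular element $f_1\in I$, and the analogous observation applied to $M/f_1M$ (whose depth at any $Q\in V(I)$ would otherwise force $\depth M_Q\leq 1$) furnishes $f_2\in I$ regular on $M/f_1M$. This two-step regular-sequence construction is the only substantive point; the rest of the proof is formal sheaf theory together with the translation of $(S_2')$ and the codimension hypothesis into depth inequalities on stalks.
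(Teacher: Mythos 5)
Your proof is correct, but it is a genuinely different route from the paper's: the paper disposes of this lemma in one line by citing \cite[(7.31)]{Hashimoto12} (the corresponding statement for coherent sheaves on a locally Noetherian scheme), after implicitly using the same reduction you make explicit, namely that $u$ being an isomorphism and the $(S_2')$ hypothesis are both statements about the underlying $\O_X$-modules, so $\Lambda$ can be forgotten. You instead reprove the commutative statement from scratch: the four-term exact sequence $0\rightarrow \uGamma_Z(\M)\rightarrow\M\rightarrow i_*i^*\M\rightarrow R^1\uGamma_Z(\M)\rightarrow 0$, the identification of the stalks of the local cohomology sheaves at $x\in Z$ with $H^0_{I_x}(\M_x)$ and $H^1_{I_x}(\M_x)$, the translation of $\codim_X(X\setminus U)\geq 2$ plus $(S_2')$ into $\depth (\M_x)_P\geq 2$ for all $P\in V(I_x)$, and the grade formula $\depth(I,M)=\min\{\depth M_P: P\in V(I)\}$ (Bruns--Herzog 1.2.10, whose two-step regular-sequence argument you correctly sketch, including the degenerate cases via the usual depth conventions). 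What your approach buys is self-containedness: the lemma no longer depends on the external reference, at the cost of about a page of standard local cohomology. What the paper's approach buys is brevity and consistency with its practice of outsourcing the commutative $(S_2)$/codimension-two machinery to \cite{Hashimoto12}.
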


\begin{proof}
  Follows immediately from \cite[(7.31)]{Hashimoto12}.
\end{proof}

\begin{proposition}\label{codim-two.prop}
  Let the notation be as above, and let $U$ be large in $X$.
  Assume that there is a $2$-canonical right $\Lambda$-module.
  Then we have the following.
  \begin{enumerate}
  \item[\bf 1]
        If $\N\in (S_2')^{i^*\Lambda\op,U}$, then $i_*\N\in (S_2')^{\Lambda\op,X}$.
  \item[\bf 2] $i^*:(S_2')^{\Lambda\op,X}\rightarrow (S_2')^{i^*\Lambda\op,U}$ and
    $i_*:(S_2')^{i^*\Lambda\op,U}\rightarrow (S_2')^{\Lambda\op,X}$ are quasi-inverse
    each other.
  \end{enumerate}
\end{proposition}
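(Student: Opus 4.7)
The plan is to derive Part~2 formally from Part~1 together with Lemma~\ref{S_2-isom.lem}, and then focus on Part~1, which splits into two substatements: (a) that $i_*\N$ is coherent, and (b) that $i_*\N$ satisfies the required depth condition. The $2$-canonical module hypothesis enters decisively in (a), while (b) is a standard local-cohomology argument once (a) is in hand.

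For Part~2, assuming Part~1, given $\M\in(S_2')^{\Lambda\op,X}$ the restriction $i^*\M$ automatically lies in $(S_2')^{i^*\Lambda\op,U}$ because the $(S_2')$ condition is stalkwise at points of $U$, where the stalks of $\M$ and of $i^*\M$ coincide; Lemma~\ref{S_2-isom.lem} then identifies $\M$ with $i_*i^*\M$ via the unit map. Conversely, given $\N\in(S_2')^{i^*\Lambda\op,U}$, Part~1 yields $i_*\N\in(S_2')^{\Lambda\op,X}$, while the counit $i^*i_*\N\to\N$ is an isomorphism because $i$ is an open immersion. This proves that $(i^*,i_*)$ is a quasi-inverse pair.

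The easier half of Part~1 is the depth condition (b). For $x\in U$ the stalk of $i_*\N$ equals that of $\N$ and satisfies the required bound. For $x\in X\setminus U$, once coherence (a) is in hand, the stalk $(i_*\N)_x$ is computed as sections of $\N$ over $\Spec\O_{X,x}\setminus Z_x$, where $Z:=X\setminus U$ satisfies $\codim_X Z\geq 2$ and $Z_x:=Z\cap\Spec\O_{X,x}$. A standard local-cohomology computation, combining the $(S_2')$ property of $\N$ at points of $U$ near $x$ with the codimension-$\geq 2$ assumption on $Z$, supplies the depth bound $\depth_{\O_{X,x}}(i_*\N)_x\geq\min(2,\dim\O_{X,x})$.

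The main obstacle is coherence (a), and this is precisely where the $2$-canonical module $\omega$ is indispensable. We work locally on an affine open $V=\Spec R\subseteq X$. Pick any coherent extension $\tilde\N$ of $\N|_{U\cap V}$ to a right $\Lambda$-module on $V$ (which exists by Noetherianness), and replace it by its $\omega$-double dual
\[
\tilde\N\dda:=\sHom_\Lambda\!\bigl(\sHom_{\Lambda\op}(\tilde\N,\omega|_V),\,\omega|_V\bigr),
\]
a coherent right $\Lambda|_V$-module. Following the commutative argument of \cite{Hashimoto12}, one checks two things: first, that $\tilde\N\dda\in(S_2')^{\Lambda\op,V}$, because $\omega$ itself is $(S_2')$ and semicanonical at primes of height $<2$; second, that the natural map $\N|_{U\cap V}\to\tilde\N\dda|_{U\cap V}$ is an isomorphism, using that $\N|_{U\cap V}$ is already $(S_2')$ and hence agrees with its $\omega$-double dual at codimension-$\leq 1$ points, and then everywhere on $U\cap V$ by $(S_2')$. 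Applying Lemma~\ref{S_2-isom.lem} to $\tilde\N\dda$ then gives $\tilde\N\dda\cong i_*(\N|_{U\cap V})$, so $i_*\N$ is coherent on each affine $V$ and the local pieces glue. The principal subtlety is the bookkeeping of left/right module structures: the functor $\sHom_{\Lambda\op}(-,\omega)$ interchanges left and right $\Lambda$-module structures, so one must verify carefully that the double dual is endofunctorial on right $\Lambda$-modules and that it preserves $(S_2')$ in the semicanonical setting.
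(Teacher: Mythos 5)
Your overall strategy is the same as the paper's: extend $\N$ to a coherent right $\Lambda$-module on $X$, replace the extension by its double dual with respect to the $2$-canonical module, observe that this double dual satisfies $(S_2')$ and still restricts to $\N$ on $U$, and then conclude from Lemma~\ref{S_2-isom.lem} that it coincides with $i_*\N$; Part~\textbf{2} is formal from Part~\textbf{1} and the lemma, exactly as in the paper. (The paper makes the extension step slightly cleaner by taking a coherent subsheaf $\Q\subset i_*\N$ with $i^*\Q=\N$, as in \cite[Exercise~II.5.15]{Hartshorne}, and then the $\Lambda$-submodule $\V\subset i_*\N$ it generates, so that $i^*\V=\N$ on the nose; and it does not need your separate local-cohomology depth verification, since the $(S_2')$ property of the double dual delivers the depth bound for $i_*\N$ directly.)

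The genuine gap is at the very point you flag as ``the principal subtlety'' and then resolve incorrectly: your double dual $\sHom_\Lambda\bigl(\sHom_{\Lambda\op}(\tilde\N,\omega),\omega\bigr)$ is not defined. A $2$-canonical module $\omega$ is only a right $\Lambda$-module, not a $\Lambda$-bimodule, so $\sHom_{\Lambda\op}(\tilde\N,\omega)$ carries no natural left $\Lambda$-module structure, and the assertion that $\sHom_{\Lambda\op}(-,\omega)$ ``interchanges left and right $\Lambda$-module structures'' is valid only when $\omega$ is a bimodule (e.g.\ $\omega=\O_X$, or the canonical bimodule), which is not part of the hypothesis. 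This left/right issue is precisely the new difficulty of the noncommutative case, and the paper handles it by dualizing through the endomorphism algebra: with $\C$ a $2$-canonical right $\Lambda$-module, set $\Gamma=\uEnd_\Lambda\C$ (endomorphisms of $\C$ as a right $\Lambda$-module), $?\da=\uHom_{\Lambda\op}(?,\C)$, which lands in $\Gamma$-modules, and $?\dda=\uHom_\Gamma(?,\C)$, which returns to right $\Lambda$-modules via the $(\Gamma,\Lambda)$-bimodule structure of $\C$; the module used is $\M=\V\ddd$. Without this (or an equivalent device) your coherence argument cannot even be formulated, and the two facts you leave as ``one checks'' --- that the double dual lies in $(S_2')^{\Lambda\op,X}$ and that it agrees with $\N$ on $U$ --- are exactly the substantive inputs on $n$-canonical modules from \cite{Hashimoto2} and \cite{Hashimoto12} that make the chain $\M\cong i_*i^*\M\cong i_*(\N\ddd)\cong i_*\N$ work.
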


\begin{proof}
  The question is local, and we may assume that $X$ is affine.

  {\bf 1}.
  There is a coherent subsheaf $\Q$ of $i_*\N$ such that $i^*\Q = i^*i_*\N=\N$
  by \cite[Exercise~II.5.15]{Hartshorne}.
  Let $\V$ be the $\Lambda$-submodule of $i_*\N$ generated by $\Q$.
  That is, the image of the composite
  \[
  \Q\otimes_{\O_X}\Lambda\rightarrow i_*\N\otimes_{\O_X}\Lambda\rightarrow i_*\N.
  \]
  Note that $\V$ is coherent, and $i^*\Q\subset i^*\V \subset i^*i_*\N=i^*\Q=\N$.

  Let $\C$ be a $2$-canonical right $\Lambda$-module.
  Let $?\da:=\uHom\Lop(?,\C)$, $\Gamma=\uEnd_{\Lambda}\C$,
  and
  $?\dda:=\uHom_{\Gamma}(?,\C)$.
  Let $\M$ be the double dual $\V\ddd$.
  Then $\M\in (S_2')^{\Lambda\op,X}$, and hence
  \[
  \M\cong i_*i^*\M\cong i_*i^*(\V\ddd)\cong i_*(i^*\V)\ddd\cong i_*(\N\ddd)\cong i_*\N.
  \]
  So $i_*\N\cong \M$ lies in $(S_2')^{\Lambda\op,X}$.

  {\bf 2} follows from {\bf 1} and Lemma~\ref{S_2-isom.lem} immediately.
\end{proof}

\end{document}